\documentclass[11pt, reqno]{amsart}
\usepackage{euscript,amscd,amsgen,amsfonts,amssymb,latexsym}

\newcommand{\eqdef}{\stackrel{\scriptscriptstyle\rm def}{=}}

\newtheorem{theorem}{Theorem}

\newtheorem{proposition}{Proposition}

\newtheorem{lemma}{Lemma}
\newtheorem{remark}{Remark}
\newtheorem*{*remark}{Remark}

\newcommand{\beha}{\begin{enumerate}}
\newcommand{\behe}{\end{enumerate}}
\renewcommand{\epsilon}{\varepsilon}

\newcommand{\cM}{\EuScript{M}}

\newcommand{\bR}{{\mathbb R}}

\newcommand{\bN}{{\mathbb N}}

\newcommand{\cA}{{\mathcal A}}

\def\C{C\!\!\!\!I}                  \def\oc{\overline \C}

\def\1{1\!\!1}

\def\and{\text{ and }}

                        \def\^{\tilde}

\def\Per{{\rm Per}}
\def\Pr{{\rm Per_{rep}}}

\def\1{1\!\!1}

\DeclareMathSymbol{\varnothing}{\mathord}{AMSb}{"3F}
\renewcommand{\emptyset}{\varnothing}
\title[Topological pressure for holomorphic dynamical systems]{Topological
  pressure for one-dimensional holomorphic dynamical systems} 
\author{Katrin Gelfert}\address{IM PAN, 00-950 Warszawa, ul. \'Sniadeckich 8, P.O. Box 137}\email{gelfert@pks.mpg.de} 
\urladdr{http://www.pks.mpg.de/$\sim$gelfert/}

\author{Christian Wolf}\address{Department of Mathematics, Wichita State
  University, Wichita, KS 67260, USA}\email{cwolf@math.wichita.edu} 
\urladdr{http://www.math.wichita.edu/$\sim$cwolf/}

\begin{document}

\begin{abstract}
  For a class of one-dimensional holomorphic maps $f$ of the Riemann sphere we
  prove that for a wide class of potentials $\varphi$ the topological pressure
  is entirely determined by the values of $\varphi$ on
  the repelling periodic points of $f$. This is a version of a classical result
  of Bowen for  hyperbolic diffeomorphisms in the holomorphic
  non-uniformly hyperbolic setting. 
\end{abstract}
\keywords{topological pressure, rational maps, holomorphic dynamics, repelling periodic points,
  invariant measures} 
\subjclass[2000]{37F10, 37D25, 37D35, 28D20}
\thanks{The research of K.G. was supported by the grant EU SPADE2. She is grateful to IM PAN for the hospitality.}
\maketitle

\section{Introduction}
In this paper we study the topological pressure $P_{\rm top}(\varphi)= P_{\rm
  top}(f,\varphi)$ of a continuous potential $\varphi$ with respect to a
one-dimensional holomorphic dynamical system $f$. To simplify the exposition
we discuss in the introduction exclusively the  case when $f$ is a rational
map of the Riemann sphere and present our more general results later on. 
Let $f:\oc\to\oc$ be a rational map with degree $d\geq 2$, and let $J$ denote
the Julia set of $f$, i.e. the closure of the repelling periodic points of $f$
(see~\cite{CG} for details). We are interested in the topological pressure
with respect to the dynamical system $f|J$.

We denote by $\Per_n(f)$ the fixed points of $f^n$ in $J$ and by
$\Per(f)=\bigcup_n\Per_n(f)$ the periodic points of $f$ in $J$. Moreover, let
$\Pr(f)\subset \Per(f)$ denote the set of repelling periodic points of $f$.
Given $\alpha>0$, $0<c\leq 1$, and $n\in \bN$ we define 
\begin{equation}\begin{split}
    \Per_n(\alpha,c) 
    = \{z\in \Per_n(f):
    & \lvert (f^k)'(f^i(z))\rvert \geq c\exp(k\alpha)\\
    & \text{ for all }  k\in \bN\text{ and }0\le i\le n-1\}. 
\end{split}\end{equation}
Thus, if  $\alpha\geq \alpha'$, $c\geq c'$, then
\begin{equation}\label{ni}
\Per_n(\alpha,c) \subset \Per_n(\alpha',c')
\end{equation}
and
\begin{equation*}
\Pr(f)=\bigcup_{\alpha>0}\bigcup_{c>0}\bigcup_{n=1}^\infty
\Per_n(\alpha,c).
\end{equation*}

Let $\cM$ denote the set of all $f$-invariant Borel probability measures on
$J$ endowed with weak$*$ topology. This makes $\cM$
to a compact convex space. Moreover, let $\cM_{\rm E}\subset \cM$ be the subset
of ergodic measures.
For $\mu\in \cM_E$ we define the Lyapunov exponent of $\mu$ by
\begin{equation}\label{mane}
\chi(\mu)= \int \log \lvert f'\rvert d\mu.
\end{equation}
It follows from Birkhoff's Ergodic Theorem that the \emph{pointwise Lyapunov
  exponent} at $z$, which is defined by
\begin{equation}
\chi(z)=\lim_{n\to\infty}\frac{1}{n}\log \lvert(f^n)'(z)\rvert,
\end{equation}
exists for $\mu$-a.e. $z\in J$ and coincides (whenever it exists) with
$\chi(\mu)$. 
We say that a measure $\mu$ is \emph{hyperbolic} if $\chi(\mu)>0$. We denote by
$h_\mu(f)$ the measure-theoretic entropy of $f$ with respect to $\mu$,
see for example~\cite{Wal:81} for the definition. Moreover, we denote by
$P_{\rm top}(\varphi)$ the topological pressure of $\varphi$ with respect to
$f$, see Section~\ref{press} for the definition. 
For $\varphi\in C(J,\bR)$ we define 
\begin{equation}\label{defalpha}
\alpha(\varphi)= \sup\{\chi(\mu): \mu\in\cM_E\cap ES(\varphi)\},
\end{equation}
where $ES(\varphi)$ denotes the set of equilibrium states of $\varphi$,
i.e. the set of measures $\mu\in\cM$ satisfying $P_{\rm top}(\varphi) =
h_\mu(f)+\int\varphi d\mu$ 
\footnote{Note that the supremum
  in~\eqref{defalpha} is in fact a maximum. This follows from the fact that
  $ES(\varphi)$ is a non-empty compact convex set whose extremal points are
  precisely the ergodic measures.}. 
We note that it follows from a general result of
Newhouse~\cite{N} (or alternatively from a theorem of Lyubich \cite{Lyu:83} or from Freire et al.~\cite{FreLopMan:83} in the case of rational maps) that for all  $\varphi\in C(J,\bR)$ we have
$ES(\varphi)\cap \cM_E\not=\emptyset$. 
Our main goal in this paper is to prove the following result (for a more
general case of not necessarily rational maps see Theorem~\ref{Main1} below): 

\begin{theorem}\label{Main}
 Let $f:\oc\to\oc$ be a rational map of and let $\varphi\in C(J,\bR)$ be a 
 H\"older continuous potential. 
 \begin{enumerate}
 \item [a)] If $\alpha(\varphi)>0$ then for all $0<\alpha<\alpha(\varphi)$
   we have 
   \begin{equation}\label{presspmain0}
     P_{\rm top}(\varphi) = 
     \lim_{c\to 0}\limsup_{n\to\infty}
     \frac{1}{n}\log \left(\sum_{z\in \Per_n(\alpha,c)} 
       \exp \left(\sum_{k=0}^{n-1}\varphi(f^k(z))\right)\right).
   \end{equation}
 \item [b)] If~\eqref{presspmain0} is true for some $\alpha>0$ then there
   exists an ergodic equilibrium state $\mu$ of $\varphi$ with $\chi(\mu)\geq
   \alpha$. 
 \end{enumerate}
\end{theorem}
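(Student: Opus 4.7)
I propose to prove the two parts separately, using the notation $S_n\varphi(z)=\sum_{k=0}^{n-1}\varphi(f^k(z))$ and $Z_n(\alpha,c)=\sum_{z\in\Per_n(\alpha,c)}e^{S_n\varphi(z)}$. Part (b) is the softer direction and I would tackle it first.

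\emph{Part (b).} For each fixed $c>0$, form the $f$-invariant weighted orbit measures
$$\mu_{n,c}=\frac{1}{Z_n(\alpha,c)}\sum_{z\in\Per_n(\alpha,c)}e^{S_n\varphi(z)}\cdot\frac{1}{n}\sum_{i=0}^{n-1}\delta_{f^i(z)}.$$
Distinct points of $\Per_n(\alpha,c)$ are $(n,\delta(\alpha,c))$-separated: Koebe distortion on a definite neighborhood where $f^n$ is univalent combined with the uniform expansion $|(f^n)'(z)|\geq ce^{n\alpha}$ forbids two distinct $n$-periodic points from coexisting in the same Bowen ball. The standard Misiurewicz-type argument from the proof of the variational principle (\cite{Wal:81}) then supplies a weak$*$ limit $\mu_c$ of $\mu_{n,c}$ satisfying $h_{\mu_c}(f)+\int\varphi\,d\mu_c\geq L(c):=\limsup_n\frac{1}{n}\log Z_n(\alpha,c)$. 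The chain rule turns the defining inequality on $|(f^n)'(z)|$ into
$$\int\log|f'|\,d\mu_{n,c}=\frac{1}{Z_n(\alpha,c)}\sum_{z}e^{S_n\varphi(z)}\cdot\frac{1}{n}\log|(f^n)'(z)|\geq\alpha+\frac{1}{n}\log c,$$
and since $\log|f'|$ is upper semicontinuous on $J$ (equal to $-\infty$ at the finitely many critical points of $f$ inside $J$ and continuous elsewhere), the Portmanteau theorem yields $\chi(\mu_c)\geq\alpha$. Letting $c\to 0$ and using upper semicontinuity of measure-theoretic entropy on $\cM$ (\cite{N}), any weak$*$ accumulation point $\mu$ of the $\mu_c$ satisfies $h_\mu(f)+\int\varphi\,d\mu\geq\lim_c L(c)=P_{\rm top}(\varphi)$ and $\chi(\mu)\geq\alpha$. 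An ergodic decomposition then produces an ergodic equilibrium state with $\chi\geq\alpha$, using that each ergodic component of an equilibrium state is itself an equilibrium state and that $\chi$ is affine on $\cM$.

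\emph{Part (a).} The upper bound is immediate from the separation above: $\frac{1}{n}\log Z_n(\alpha,c)\leq P(\varphi,\delta(\alpha,c))\leq P_{\rm top}(\varphi)$ for every $n,\alpha,c$. For the lower bound, pick an ergodic equilibrium state $\mu_0$ with $\chi(\mu_0)=\alpha(\varphi)$ and fix $\alpha<\alpha(\varphi)$. Invoke a Katok-type approximation theorem for hyperbolic ergodic measures of rational maps (in the spirit of Przytycki) to obtain, for each $\epsilon>0$, a compact uniformly expanding $f$-invariant set $\Lambda_\epsilon\subset J$ bounded away from the critical set, with $h_{\rm top}(f|\Lambda_\epsilon)\geq h_{\mu_0}(f)-\epsilon$, minimum Lyapunov exponent strictly above $\alpha$, and an equilibrium state $\nu_\epsilon$ of $\varphi|\Lambda_\epsilon$ satisfying $\int\varphi\,d\nu_\epsilon\geq\int\varphi\,d\mu_0-\epsilon$ (the H\"older hypothesis on $\varphi$ supplies the distortion control needed to match these integrals with $\mu_0$-a.e.\ Birkhoff averages). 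Uniform expansion on $\Lambda_\epsilon$ furnishes a constant $c_0=c_0(\epsilon)>0$ with $\Per_n(f|\Lambda_\epsilon)\subseteq\Per_n(\alpha,c_0)$ for every $n$, and Bowen's formula for the pressure of an expanding repeller with H\"older potential gives
$$\lim_{n\to\infty}\frac{1}{n}\log\sum_{z\in\Per_n(f|\Lambda_\epsilon)}e^{S_n\varphi(z)}=P_{\rm top}(f|\Lambda_\epsilon,\varphi)\geq P_{\rm top}(\varphi)-O(\epsilon).$$
By the monotonicity~\eqref{ni}, decreasing $c$ below $c_0$ only enlarges $Z_n(\alpha,c)$, so sending $\epsilon\to 0$ and then $c\to 0$ closes the gap with the upper bound.

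\emph{Main obstacle.} The technical heart of the theorem is the extraction of $\Lambda_\epsilon$ in the non-uniformly hyperbolic holomorphic setting. Katok's original Pesin-theoretic construction must be adapted to $\oc$ while staying quantitatively away from the critical points of $f$ and simultaneously approximating three quantities: the entropy $h_{\mu_0}(f)$, the integral $\int\varphi\,d\mu_0$, and, crucially for this theorem, a \emph{pointwise} lower bound on Lyapunov exponents valid on \emph{every} orbit of $\Lambda_\epsilon$. It is this last refinement, rather than the entropy or integral approximations standard in Katok's theorem, that guarantees the periodic orbits of $\Lambda_\epsilon$ land in $\Per_n(\alpha,c_0)$ rather than merely in the ambient $\Pr(f)$; without it, Theorem~\ref{Main}(a) would reduce to the ordinary variational principle.
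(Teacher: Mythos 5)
Your proposal is essentially the paper's own argument: part (b) via weighted periodic-orbit measures, separation, the Misiurewicz-type inequality, upper semicontinuity of entropy, and an ergodic decomposition; part (a) via separation for the upper bound and Katok/Przytycki--Urba\'nski approximation by uniformly expanding repellers for the lower bound (cf.\ the paper's Proposition~\ref{li} and Lemma~\ref{ha}), followed by Bowen's periodic-orbit formula for expanding maps. Two small stylistic differences are worth flagging. First, for the Lyapunov bound on the limit measure in part (b), the paper replaces $\log|f'|$ by a monotone decreasing sequence of continuous functions $\phi_i\ge\log|f'|$ equal to $\log|f'|$ off shrinking balls around the critical points and then applies Monotone Convergence; you instead invoke the Portmanteau/upper-semicontinuity inequality for $\int\log|f'|\,d\mu$ directly. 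These are equivalent (the $\phi_i$ are precisely the continuous functions witnessing the upper semicontinuity), and your phrasing is arguably cleaner. Second, in the lower bound of part (a) your statement of what the Katok-type approximation delivers is a little awkward: you ask separately for $h_{\rm top}(f|\Lambda_\epsilon)\ge h_{\mu_0}(f)-\epsilon$ and for the equilibrium state $\nu_\epsilon$ of $\varphi|\Lambda_\epsilon$ to have $\int\varphi\,d\nu_\epsilon$ close to $\int\varphi\,d\mu_0$, which is not quite what is needed nor what the theorem provides. What the Przytycki--Urba\'nski version gives, and what the paper uses, is a single invariant (not necessarily equilibrium) measure $\nu$ on $\Lambda_\epsilon$ with both $h_\nu(f)$ and $\int\varphi\,d\nu$ simultaneously close to those of $\mu_0$; this immediately yields $P_{\rm top}(f|\Lambda_\epsilon,\varphi)\ge P_{\rm top}(\varphi)-O(\epsilon)$ without any claim about the equilibrium state of the restricted system. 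The fix is trivial and the underlying idea is identical, but as written your formulation leaves a slight gap in the chain of inequalities.
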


We note that Theorem~\ref{Main} generalizes a well-known result of Bowen for
Axiom A diffeomorphisms to the case of holomorphic non-uniformly expanding dynamical
systems. For a related result in the case of non-uniformly hyperbolic
diffeomorphisms we refer to~\cite{GW}. 

We briefly mention work where related assumptions on the potentials have been
used. 
Note that the assumption of Theorem~\ref{Main} is satisfied if
$P_{\rm top}(\varphi)>\max_{z\in J}\varphi(z)$ and in particular if
$\max_{z\in J}\varphi(z)-\min_{z\in J}\varphi(z)<h_{\rm top}(f|J)$ are
satisfied, which are (much stronger than $\alpha(\varphi)>0$) open conditions
in the $C^0$ topology. 
The latter condition has been mentioned first in~\cite{HofKel:82} in the
context of piecewise monotonic maps of the unit interval and of a bounded
variation potential $\varphi$ to guarantee the existence and good ergodic
properties of equilibrium states for $\varphi$, using a spectral gap
approach. In~\cite{DenUrb:91}, it is shown that for a rational map of degree $\ge 2$ on the Riemannian sphere for a H\"older continuous potential $\varphi$ satisfying $P_{\rm top}(\varphi)>\sup\varphi$, there is a unique equilibrium state for $\varphi$. 
Analogous results are obtained for a class of non-uniformly
expanding local diffeomorphisms and H\"older continuous potentials satisfying
such a low oscillation condition (see~\cite{ArbMat:w06} and references 
therein).  

Przytycki et al.~\cite{PRS} consider a  pressure of the potential $-t\log\lvert f'\rvert$ which is defined as in~\eqref{presspmain0} except that they use \emph{all} periodic points rather than only points in $\Per_n(\alpha,c)$.
 They prove the equality between this pressure and various other types of
pressures in the case of  rational maps satisfying an additional
hypothesis that not too many periodic orbits with Lyapunov exponent close to 1 
move close together (which is satisfied if $f$ is a topological Collet-Eckmann map
or, equivalently, if $f$ is uniformly expanding on periodic orbits).
It would be interesting to know under which conditions their pressure coincides with
the pressure in~\eqref{presspmain0}.

This paper is organized as follows. In Section~\ref{sec2} we introduce a class
of one-dimensional holomorphic (not necessarily rational) dynamical systems
and discuss various notions of topological pressure. In  Section~\ref{sec3} we
prove our main result showing that for this class of systems the topological
pressure is entirely determined by the values of the potential on the
repelling periodic points.  

\section{Preliminaries}\label{sec2}

\subsection{A class of one-dimensional holomorphic dynamical systems}

Let $X \subset \oc$ be compact and let $f:X\to X$ be continuous. We say that
$f\in \cA(X)$ if there is an open neighborhood $U$ of $X$ such that $f$
extends to a holomorphic map on $U$ and for every $z\in U\backslash X$ 
\begin{equation}\label{cond}
\begin{split}
&\text{either } z \text{ leaves } U \text{ under iteration of } f,\\
&\text{or }\liminf_{n\to \infty}\frac{1}{n}\log\lvert (f^n)'(z)\rvert=0 
\end{split}
\end{equation}
holds. Without further specification we will always use a specific set $U$
associated with $X$ and $f$ and we will also denote the extension of $f$ to
$U$ by $f$.  
We note that in the particular case when $f$ is a rational map on $\oc$ with
Julia set $J$ then a normal family argument shows that $f\in \cA(J)$. 
For $f\in \cA(X)$ we will continue to use the  notation from Section 1
(e.g. $\Per(f)$, $\Pr(f)$, $\Per_n(\alpha,c)$, $\cM$, $\cM_E$, $\chi(\mu)$,
$\chi(z)$, $\alpha(\varphi)$, etc.) for $f|X$. 

Let now $U\subset \oc$ be open and $f:U\to \oc$ be holomorphic. 
We say that $f$ is \emph{expanding} on a compact $f$-invariant set
$\Lambda\subset U$ if there exist constants $c>0$ and $\beta>1$ such that 
\[
\lvert (f^n)'(z)\rvert \ge c\beta^n
\]
for all $n\in\bN$ and all $z\in \Lambda$.
We note that for $f\in \cA(X)$ every invariant expanding set $\Lambda\subset U$ is
contained in $X$. This follows from~\eqref{cond}.

\subsection{Various pressures}\label{press}

We first recall the definition of the classical topological pressure. Let
$(X,d)$ be a compact metric space and let $f\colon
X\to X$ be a continuous map. For $n \in {\mathbb N}$ we
define a new metric $d_n$ on $X$ by $d_n(z,y) = \max_{k=0,\ldots ,n-1}
d(f^k(z),f^k(y))$. A set of points $\{ z_i\colon i\in I \}\subset X$ is called
\emph{$(n,\varepsilon)$-separated} (with respect to $f$) if
$d_n(z_i,z_j)> \varepsilon$ holds for all $z_i,z_j$ with $z_i \ne
z_j$. Fix for all $\varepsilon>0$ and all $n\in\bN$ a maximal
(with respect to the inclusion) $(n,\varepsilon)$-separated set
$F_n(\epsilon)$. The \emph{topological pressure} (with respect to $f|X$)
is a map $ P_{\rm top}(f|X,.)\colon C(X,\bR)\to \bR$ defined by 
\begin{equation}\label{defdru}
  P_{\rm top}(f|X,\varphi) 
  = \lim_{\varepsilon \to 0}\limsup_{n\to \infty}
    \frac{1}{n} \log \left(\sum_{z\in F_n(\epsilon)}
      \exp S_n\varphi(z) \right),
\end{equation}
where
\begin{equation}\label{eqsn}
S_n\varphi(z)\eqdef\sum_{k=0}^{n-1}\varphi(f^k(z)).
\end{equation}
The \emph{topological entropy} of $f$ is defined by $h_{\rm top}(f|X)=P_{\rm
  top}(f|X,0)$.   
For simplicity we write $P_{\rm top}(\varphi)$ if there is no confusion about
  $f$ and $X$. 
Note that the definition of $P_{\rm top}(\varphi)$ does not depend on the
choice of the sets $F_n(\epsilon)$ (see~\cite{Wal:81}).
The topological pressure satisfies the following variational principle:
\begin{equation}\label{eqvarpri}
P_{\rm top}(\varphi)
= \sup_{\nu\in \cM} \left(h_\nu(f)+\int_\Lambda \varphi\,d\nu\right).
\end{equation}
Furthermore, the supremum in~\eqref{eqvarpri} can be replaced by
the supremum taken only over all $\nu\in\cM_{\rm E}$. We denote by
$ES(\varphi)$ the set of equilibrium states for $\varphi$, that is, the set of
measures attaining the supremum in~\eqref{eqvarpri}. We note that in general
$ES(\varphi)$ can be empty, however if $f\in \cA(X)$, then $ES(\varphi)$ contains
at least one (ergodic) measure. This follows from a result of
Newhouse~\cite{N}.  

Next we introduce a pressure-like quantity by using the values of
$\varphi$ on the periodic points in $X$. Let $\varphi\in C(X,\bR)$ and let
$0<\alpha$, $0<c\leq 1$. We define
\begin{equation*}
 Q_{\rm P}(\varphi,\alpha,c,n) =
\sum_{z\in \Per_n(\alpha,c)} \exp S_n\varphi(z)
\end{equation*}
if $\Per_n(\alpha,c)\ne\emptyset$ and
\begin{equation*}
Q_{\rm P}(\varphi,\alpha,c,n) =
\exp\left(n\min_{z\in X} \varphi(z)\right)
\end{equation*}
otherwise. Furthermore, we define
\begin{equation*}
P_{\rm P}(\varphi,\alpha,c) = \limsup_{n\to\infty}
                 \frac{1}{n}\log Q_{\rm P}(\varphi,\alpha,c,n).
\end{equation*}
It follows from the  definition that if $\Per_n(\alpha,c)\ne\emptyset$ for
some $n\in\bN$ then this is true already for infinitely many
$n\in\bN$. Therefore, in the case when $\Per_n(\alpha,c)\ne\emptyset$ for
some $n\in\bN$ then $P_{\rm P}(\varphi,\alpha,c)$ is
entirely determined by the values of $\varphi$ on
$\bigcup_{n\in\bN}\Per_n(\alpha,c)$.

\section{Pressure equals periodic point pressure}\label{sec3}

In this section we show for $f\in \cA(X)$ and a rather general class of
potentials that the topological pressure is entirely determined by the values
of the potential on the repelling periodic points. 
More precisely we prove the following theorem.

\begin{theorem}\label{Main1}
 Let $f\in \cA(X)$ and let $\varphi\in C(X,\bR)$ be a H\"older continuous
 potential with $\alpha(\varphi)>0$. 
 \begin{enumerate}
 \item [a)] If $\alpha(\varphi)>0$ then for all $0<\alpha<
   \alpha(\varphi)$ we have  
   \begin{equation}\label{presspmain}
     P_{\rm top}(\varphi) = \lim_{c\to 0}\limsup_{n\to\infty}\frac{1}{n}\log
     \left(\sum_{z\in \Per_n(\alpha,c)} 
       \exp  \left(\sum_{k=0}^{n-1}\varphi(f^k(z))\right)\right).
   \end{equation}
 \item [b)] If~\eqref{presspmain} is true for some $\alpha>0$ then there
   exists an ergodic equilibrium state $\mu$ of $\varphi$ with $\chi(\mu)\geq \alpha$.
 \end{enumerate}
\end{theorem}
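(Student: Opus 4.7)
For part (a), the plan is to establish both inequalities separately. The direction $P_{\rm P}(\varphi,\alpha,c)\leq P_{\rm top}(\varphi)$ is the easier one: I would show that for each fixed $\alpha>0$ and $c\in(0,1]$ the set $\Per_n(\alpha,c)$ is $(n,\epsilon)$-separated for some $\epsilon=\epsilon(\alpha,c)>0$ independent of $n$. The uniform lower bound $|(f^n)'(z)|\geq c\exp(n\alpha)$ on every point $z\in\Per_n(\alpha,c)$, combined with Koebe's distortion theorem applied along the forward orbit (whose expansion estimates are also encoded in the definition of $\Per_n(\alpha,c)$), produces a uniform injectivity radius in the $d_n$-metric, so two distinct such points cannot lie within $d_n$-distance $\epsilon$. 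The sum $Q_{\rm P}(\varphi,\alpha,c,n)$ is then dominated by the separated-set sum in the definition~\eqref{defdru}, yielding $P_{\rm P}(\varphi,\alpha,c)\leq P_{\rm top}(\varphi)$ uniformly in $c$ as $n\to\infty$.

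For the reverse inequality in part (a), I would use non-uniformly hyperbolic horseshoe approximation. Since $\alpha<\alpha(\varphi)$, there is an ergodic $\mu\in ES(\varphi)$ with $\chi(\mu)>\alpha$. A Katok--Pesin-type construction adapted to the class $\cA(X)$ (in the spirit of Przytycki's theory of hyperbolic measures for rational maps) produces for each $\delta>0$ a compact $f$-invariant set $\Lambda_\delta\subset X$ on which $f$ is uniformly expanding at some rate strictly greater than $\alpha$, is topologically conjugate to a subshift of finite type, and satisfies
\[
P_{\rm top}(f|\Lambda_\delta,\varphi|\Lambda_\delta)>P_{\rm top}(\varphi)-\delta.
\]
The uniform expansion of $f|\Lambda_\delta$ together with forward-invariance forces the existence of $c_\delta>0$ with every periodic point of period $n$ in $\Lambda_\delta$ belonging to $\Per_n(\alpha,c_\delta)$. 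Bowen's classical periodic-orbit formula on uniformly expanding repellers expresses $P_{\rm top}(f|\Lambda_\delta,\varphi|\Lambda_\delta)$ as the limit of the corresponding periodic-orbit sums; letting $\delta\to 0$ (so $c_\delta\to 0$) then gives the lower bound in~\eqref{presspmain}.

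For part (b), assume~\eqref{presspmain} holds for some fixed $\alpha>0$. Choose $c_k\downarrow 0$ and along each $c_k$ an index $n_k\to\infty$ realizing the limsup, and form the Bowen-type empirical measures
\[
\mu_{n_k,c_k}=\frac{1}{Q_{\rm P}(\varphi,\alpha,c_k,n_k)}\sum_{z\in\Per_{n_k}(\alpha,c_k)}\exp(S_{n_k}\varphi(z))\cdot\frac{1}{n_k}\sum_{j=0}^{n_k-1}\delta_{f^j(z)},
\]
which are $f$-invariant, and extract a weak-$*$ accumulation point $\mu$ along a diagonal subsequence. A standard Bowen-type partition-and-counting argument, using that points of $\Per_{n_k}(\alpha,c_k)$ are separated by $\epsilon(\alpha,c_k)$ in the $d_{n_k}$-metric, gives $h_\mu(f)+\int\varphi\,d\mu\geq P_{\rm top}(\varphi)$, so $\mu\in ES(\varphi)$. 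Upper semicontinuity of $\log|f'|$ (bounded above, with value $-\infty$ only at critical points), together with the bound $\tfrac{1}{n}\log|(f^n)'(z)|\geq\alpha+\tfrac{\log c}{n}$ on $\Per_n(\alpha,c)$, forces $\chi(\mu)=\int\log|f'|\,d\mu\geq\alpha$. Passing to the ergodic decomposition, a positive-measure set of components lies simultaneously in $ES(\varphi)$ and has Lyapunov exponent $\geq\alpha$, producing the required ergodic equilibrium state.

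The technical heart of the argument is the horseshoe approximation step in part (a). In the holomorphic setting critical points may accumulate on $X$, so the classical smooth Katok approximation theorem does not apply verbatim; one must combine Pesin-theoretic shadowing with the defining escape/subexponential-growth property~\eqref{cond} of $\cA(X)$ to produce horseshoes that simultaneously carry almost all of the topological pressure and enjoy forward expansion estimates strong enough to place their periodic orbits into $\Per_n(\alpha,c)$. Once this non-uniformly hyperbolic approximation is available, the remaining ingredients (Koebe distortion on the expanding pieces, weak-$*$ compactness of $\cM$, upper semicontinuity of $\log|f'|$, the Bowen separated-set counting, and the ergodic decomposition) are comparatively routine.
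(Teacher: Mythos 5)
Your proposal follows essentially the same strategy as the paper: the upper bound $P_{\rm P}(\varphi,\alpha,c)\le P_{\rm top}(\varphi)$ via $(n,\epsilon)$-separation of $\Per_n(\alpha,c)$, the lower bound via Katok/Przytycki--Urba\'nski horseshoe approximation of an ergodic equilibrium state with $\chi>\alpha$ (the paper's Proposition~\ref{li}) followed by Bowen's periodic-orbit formula on the expanding pieces and $c\to 0$, and part~(b) via weighted Bowen-type empirical measures, a weak-$\ast$ limit, upper semicontinuity of the entropy map, control of the Lyapunov exponent of the limit, and an ergodic decomposition step. The only departures are cosmetic: for the separation you invoke Koebe distortion along orbits, while the paper more simply observes that $f$ is expanding on $\Lambda_{\alpha,c}=\overline{\bigcup_n\Per_n(\alpha,c)}$ and uses expansivity (Lemma~\ref{ha}); you write the manifestly invariant Ces\`aro-averaged version of the Bowen measure (which is actually what Walters' Theorem~9.10 uses, and is a slightly cleaner formulation than the paper's $\sigma_n$); and you handle $\chi(\mu)\ge\alpha$ by upper semicontinuity of $\log|f'|$ directly rather than through the paper's explicit monotone approximating sequence $(\phi_i)$ near critical points, though the two are equivalent.
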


\begin{*remark}{\rm
    Note that Theorem~\ref{Main1} immediately implies Theorem~\ref{Main}.}  
\end{*remark}

We delay the proof of Theorem~\ref{Main1} for a while and first prove some
preliminary results. 

\begin{lemma}\label{ha}
 Let $f\in \cA(X)$ and let $\Lambda$ be an invariant set on which $f$ is expanding. Let $\varphi\in
 C(\Lambda,\bR)$ be a H\"older continuous potential. Then  
 \begin{equation}\label{eqdruck}
   \limsup_{n\to\infty}\frac{1}{n}
   \log\left(\sum_{z\in\Per_n(f|\Lambda)}\exp S_n\varphi(z)\right) 
   \le P_{\rm top}(f|\Lambda,\varphi).
 \end{equation}
 In particular, if $f|\Lambda$ is topologically mixing, then we have equality
 in~\eqref{eqdruck}, and the limit superior is in fact a limit.
\end{lemma}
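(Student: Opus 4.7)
My plan splits the proof into the upper bound (which holds under mere expansion) and the mixing case (which upgrades the inequality to equality and the limsup to a limit).

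For the upper bound, I would exploit the fact that an expanding map is automatically expansive: the estimate $|(f^n)'(z)| \ge c\beta^n$ on $\Lambda$ forces the existence of an expansivity constant $\epsilon_0 > 0$ such that any two distinct $z,y \in \Lambda$ satisfy $d_m(z,y) > \epsilon_0$ for some $m$. If $z, y \in \Per_n(f|\Lambda)$ are distinct then both are periodic of period dividing $n$, so $d_n(z,y)=\sup_{k\ge 0} d(f^k z, f^k y) > \epsilon_0$. Hence $\Per_n(f|\Lambda)$ is $(n,\epsilon_0)$-separated, so can be extended to a maximal $(n,\epsilon_0)$-separated set $F_n(\epsilon_0)$. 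Then
\[
\sum_{z\in \Per_n(f|\Lambda)} \exp S_n\varphi(z) \le \sum_{z\in F_n(\epsilon_0)} \exp S_n\varphi(z),
\]
and taking $\limsup_n \frac{1}{n}\log$ followed by letting $\epsilon_0 \to 0$ yields the right-hand side of \eqref{eqdruck}. (Since $P_{\rm top}$ is monotone in $\epsilon$ in the sense that the limsup only grows as $\epsilon$ shrinks, and a single $\epsilon_0$ is enough to dominate, this step is immediate from \eqref{defdru}.)

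For the topologically mixing case I would invoke Bowen's classical theorem that an expansive map with the specification property has pressure coinciding with its periodic point pressure, and recall that expanding + topologically mixing implies specification. The scheme is as follows. Fix $\e>0$ small and a maximal $(n,\e)$-separated set $E_n \subset \Lambda$. Choosing $\delta<\e/2$, the specification property furnishes an integer $N=N(\delta)$ and, for each $z\in E_n$, a periodic point $p_z$ of period $n+N$ which $\delta$-shadows the orbit of $z$ for the first $n$ iterates. The assignment $z \mapsto p_z$ is injective because $\e$-separation of $E_n$ and $\delta$-shadowing with $\delta<\e/2$ are incompatible with collisions. Hölder continuity of $\varphi$ together with the exponential expansion rate gives the standard Bowen distortion estimate $|S_n\varphi(z)-S_n\varphi(p_z)|\le C$, with $C$ independent of $n$ and $z$. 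Putting these together,
\[
\sum_{z\in E_n} \exp S_n\varphi(z) \le e^{C}\, e^{N\|\varphi\|_\infty} \sum_{p\in \Per_{n+N}(f|\Lambda)} \exp S_{n+N}\varphi(p).
\]
Taking $\limsup$ in $n$ and then $\e\to 0$ converts the left side into $P_{\rm top}(f|\Lambda,\varphi)$ and upgrades \eqref{eqdruck} to equality.

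To replace the $\limsup$ by a genuine limit, I would use specification in the other direction to concatenate orbit segments: for periodic points $p$ of period $n$ and $q$ of period $m$, specification produces a periodic point of period $n+m+N$ whose first $n$ iterates $\delta$-shadow the orbit of $p$ and whose next $m$ iterates $\delta$-shadow the orbit of $q$. Combined with the distortion bound and with the injectivity of the concatenation map up to bounded multiplicity, this yields an approximate supermultiplicativity
\[
Q(n)\,Q(m) \le K\, Q(n+m+N),\qquad Q(n) \eqdef \sum_{z\in \Per_n(f|\Lambda)} \exp S_n\varphi(z),
\]
from which the existence of $\lim_n \frac{1}{n}\log Q(n)$ follows by Fekete's lemma applied to $\log Q(n) - \text{const}$.

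The main obstacle I anticipate is the Bowen distortion estimate: making precise the statement that a Hölder potential has bounded variation along Bowen balls requires a careful Koebe-type argument using the holomorphic extension on $U$, since the expanding hypothesis on $\Lambda$ is only assumed in terms of $|(f^n)'|$ and one needs to control how $\delta$-shadowing orbits stay in a region where holomorphic distortion control is available. Once this standard tool is secured, the rest of the argument is bookkeeping.
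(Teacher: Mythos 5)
Your argument for the inequality \eqref{eqdruck} is essentially the paper's: expanding implies (positively) expansive, so $\Per_n(f|\Lambda)$ is $(n,\varepsilon)$-separated for every $\varepsilon$ at most the expansivity constant, and the inequality follows because the pressure is computed as a supremum over $(n,\varepsilon)$-separated sets. For the equality and the existence of the limit in the topologically mixing case the paper simply cites Ruelle, \emph{Thermodynamic Formalism}, Chapter~7, whereas you unpack the standard Bowen specification argument behind that citation (expanding plus mixing gives specification, injective assignment of shadowing periodic orbits, Bowen bounded distortion for the H\"older potential, and a Fekete-type supermultiplicativity for the limit); this is the same content, spelled out rather than cited. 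One small correction to the ``main obstacle'' you anticipate: the Bowen distortion estimate for $S_n\varphi$ requires no Koebe or holomorphic distortion input. Because $\lvert(f^n)'\rvert\ge c\beta^n$ on the compact invariant set $\Lambda$, the local inverse branches along $\Lambda$ are uniformly contracting, so any two orbits that $\delta$-shadow for $n$ steps satisfy $d(f^kz,f^ky)\le C'\delta\,\beta^{-(n-1-k)}$ for $0\le k\le n-1$, and summing the H\"older increments of $\varphi$ gives a bound independent of $n$. This is a purely metric, $C^1$ argument valid for any expanding map, and since specification is applied to $f|\Lambda$ the shadowing periodic points already lie in $\Lambda$, so there is no issue of orbits leaving the region where the expansion estimate holds.
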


\begin{proof}
  Since $f|\Lambda$ is expanding it is expansive. Given any expansivity
  constant $\delta$, for every $n\in\bN$ and every $0<\varepsilon\le \delta$
  the set $\Per_n(f|\Lambda)$ is $(n,\varepsilon)$-separated. Now, 
  inequality~\eqref{eqdruck} follows from the fact that the
  definition~\eqref{defdru} can be replaced by the supremum taken over all 
  $(n,\epsilon)$-separated sets (see~\cite{Wal:81}). For the proof of the
  second statement we refer to  \cite[Chapter 7]{Rue:04}.
\end{proof}

\begin{remark}\label{Rem:1}{\rm
 The identity in~\eqref{eqdruck} holds in the more general case 
 of topologically mixing expanding maps (see~\cite[Chapter~7]{Rue:04}).
 In particular, if  $\Lambda$ is a repeller of a differentiable map $f$  such
 that $f|\Lambda$ is conjugate to a (one-sided) irreducible aperiodic subshift
 of finite type then \eqref{eqdruck} is an identity. 
}\end{remark}

\begin{proposition}\label{li}
  Let $f\in\cA(X)$ and let $\varphi\in C(X,\bR)$ be a  continuous
  potential. Then for all $\mu\in \cM_{\rm E}$ with $\chi(\mu)>0$ and for all
  $0<\alpha<\chi(\mu)$ we have 
  \begin{equation}\label{eqws}
  h_\mu(f)+\int\varphi d\mu \le
  \lim_{c\to 0} P_{\rm P}(\varphi,\alpha,c).
  \end{equation}
\end{proposition}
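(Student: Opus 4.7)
The plan is to approximate the ergodic hyperbolic measure $\mu$ by a uniformly expanding ``horseshoe'' $\Lambda$ sitting inside $X$, and then use Lemma~\ref{ha} together with the variational principle on $\Lambda$ to transfer entropy and integral from $\mu$ to a sum over the periodic points of $f|\Lambda$, which in turn will sit inside $\Per_n(\alpha,c)$ for suitable $c$. Concretely, fix $\varepsilon>0$ and pick $\alpha'$ with $\alpha<\alpha'<\chi(\mu)$.

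First I would invoke a Katok-type approximation in the one-dimensional holomorphic setting (available for rational maps via Pesin theory, e.g.\ work of Przytycki--Urba\'nski) to produce a compact $f$-invariant set $\Lambda$ such that: $f|\Lambda$ is uniformly expanding with expansion rate exceeding $\alpha'$, so that $\lvert(f^k)'(w)\rvert\ge c_0\exp(k\alpha')$ for all $w\in\Lambda$ and some $c_0\in(0,1]$; $f|\Lambda$ is topologically conjugate to a topologically mixing subshift of finite type (so that Lemma~\ref{ha} applies in its equality form); and there is an ergodic $f$-invariant measure $\nu$ on $\Lambda$ satisfying $h_\nu(f)>h_\mu(f)-\varepsilon$ and $\bigl\lvert\int\varphi\,d\nu-\int\varphi\,d\mu\bigr\rvert<\varepsilon$. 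Note that by the observation after~\eqref{cond}, uniform expansion of $\Lambda$ forces $\Lambda\subset X$, so its periodic points are genuine elements of $\Per(f)$. The variational principle applied on $\Lambda$ then gives
\[
P_{\rm top}(f|\Lambda,\varphi)\ \ge\ h_\nu(f)+\int\varphi\,d\nu\ \ge\ h_\mu(f)+\int\varphi\,d\mu-2\varepsilon,
\]
while Lemma~\ref{ha} (in the topologically mixing case) yields
\[
P_{\rm top}(f|\Lambda,\varphi)\ =\ \lim_{n\to\infty}\frac{1}{n}\log\sum_{z\in\Per_n(f|\Lambda)}\exp S_n\varphi(z).
\]
Since $\Lambda$ is uniformly expanding with rate $\alpha'>\alpha$, every $z\in\Per_n(f|\Lambda)$ satisfies the bound defining $\Per_n(\alpha,c_0)$, so $\Per_n(f|\Lambda)\subset\Per_n(\alpha,c_0)\subset\Per_n(\alpha,c)$ for every $0<c\le c_0$ by~\eqref{ni}. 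Comparing the periodic sums and passing to the limsup gives $P_{\rm P}(\varphi,\alpha,c)\ge P_{\rm top}(f|\Lambda,\varphi)\ge h_\mu(f)+\int\varphi\,d\mu-2\varepsilon$ for all sufficiently small $c$; letting $c\to 0$ and then $\varepsilon\to 0$ proves~\eqref{eqws}.

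The main obstacle is the Katok-horseshoe step. For rational maps acting on the Julia set, the existence of Pesin-type uniformly expanding approximating sets with the three required properties (entropy, integral, mixing) is classical, but for the broader class $\cA(X)$ one has to check that the standard Pesin construction can be carried out using only the assumptions in~\eqref{cond}: namely, that the horseshoe produced on a Pesin block has orbits that never leave $U$ and therefore inherits holomorphic expansion from $f$, and that the associated return map is conjugate to a topologically mixing SFT. Once these are in hand, the rest of the argument is essentially bookkeeping.
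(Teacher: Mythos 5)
Your proposal follows essentially the same route as the paper: a Katok-type approximation by a uniformly expanding invariant set (the Przytycki--Urba\'nski construction, \cite[Chapter 9]{PrzUrb}), followed by the variational principle on the horseshoe, the periodic-point characterization of pressure there, and the observation that horseshoe periodic points land in $\Per_n(\alpha,c_0)$. One detail is glossed over, though you flag it yourself at the end: what Katok's construction actually delivers is a set $X_n$ for which some \emph{iterate} $f^m|X_n$ is conjugate to a full shift, not $f|\Lambda$ itself conjugate to a topologically mixing SFT. If $f$ merely permutes pieces of the horseshoe cyclically, $\Per_n(f|\Lambda)$ can be empty for most $n$ and the equality form of Lemma~\ref{ha} does not apply directly to $f|\Lambda$. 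The paper handles this explicitly by working with $f^m$: it uses $m\,P_{\rm top}(f|X_n,\varphi)=P_{\rm top}(f^m|X_n,S_m\varphi)$, applies the periodic-point identity to $f^m|X_n$ (which \emph{is} mixing), and then converts $\Per_{mk}(f)\cap X_n$ back into a bound on $\sum_{z\in\Per_k(f)\cap X_n}\exp S_k\varphi(z)$, which in turn sits inside $\Per_k(\alpha,c_0)$. Your argument is correct once that conversion step is carried out; as you say, it is bookkeeping, but it is bookkeeping that the published proof actually writes down.
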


\begin{proof}
Consider $\mu\in \cM_{\rm E}$ with $\chi(\mu)>0$, and fix
$0<\alpha<\chi(\mu)$. Since $\chi(\mu)>0$, condition~\eqref{cond} implies that
${\rm supp}(\mu)\subset X$ and thus the left hand side of~\eqref{eqws} is
well-defined. 
It is a consequence of Katok's theory~\cite{K} in it's version
for holomorphic endomorphisms developed by Przytycki and Urbanski
(\cite[Chapter 9]{PrzUrb}, see  also~\cite{Prz:05}) that there exists a sequence $(\mu_n)_n$ of
measures $\mu_n\in \cM_{\rm E}$ supported on expanding sets $X_n\subset X$
such that 
\begin{equation}\label{holl}
  h_\mu(f)+\int_{}\varphi d\mu \le 
  \liminf_{n\to\infty}P_{\rm top}(f|X_n,\varphi)
\end{equation}
and $\mu_n\to\mu$ with respect to the weak$\ast$ topology.
Moreover, for each $n\in\bN$ there exist $m=m(n)\in \bN$ and $s=s(n)\in\bN$
such that $f^m| X_n$ is conjugate to the full shift in $s$ symbols. For every
$0<\varepsilon<\chi(\mu)-\alpha$ there is a number $n=n(\varepsilon)\in \bN$
such that
\begin{equation}\label{kuh}
h_\mu(f)+\int_{}\varphi d\mu -\varepsilon
\le P_{\rm top}(f|X_n,\varphi) .
\end{equation}
Moreover, there exists a number $c_0=c_0(n,\epsilon)$ with $0<c_0(n)\le1$ such
that for every periodic point $z\in X_n$ and every $k\in\bN$ we have
\begin{equation}\label{eq15}
  c_0^{-1}e^{k(\chi(\mu)-\varepsilon)} \le 
  \lvert (f^k)'(z)\rvert \le c_0e^{k(\chi(\mu)+\varepsilon)} 
\end{equation}
Note that~\eqref{eq15} is a consequence of the construction of the sets $X_n$
in~\cite[Chapter 9.6]{PrzUrb}.  
This implies that
\begin{equation}\label{kir}
  \Per_k(f)\cap X_n \subset \Per_k(\alpha,c_0)
\end{equation}
for all $k\in\bN$. 
Let $m$, $s\in\bN$ such that $f^m|X_n$ is topologically conjugate to the full
shift in $s$ symbols.
Since $mP_{\rm top}(f|X_n,\varphi) = P_{\rm top}(f^m|X_n,S_m\varphi)$
(see~\cite[Theorem 9.8]{Wal:81}), we can conclude that
\begin{equation*}
  h_\mu(f)+\int\varphi d\mu -\varepsilon
  \le \frac{1}{m}P_{\rm top}(f^m|X_n,S_m\varphi).
\end{equation*}
Recall that $S_m\varphi(z)=\sum_{i=0}^{m-1}\varphi(f^i(z))$.
It now follows from Remark~\ref{Rem:1} and an elementary calculation that
\begin{equation}\label{eqrep}
  \begin{split}
  h_\mu(f)&+\int \varphi d\mu -\varepsilon\\
    &\le
    \frac{1}{m}\lim_{k\to\infty}\frac{1}{k}\log\left(\sum_{z\in\Per_{mk}(f)\cap
    X_n} 
      \exp \left(\sum_{i=0}^{k-1}S_m\varphi(f^{im}(z))\right)\right)\\
  &\le \lim_{k\to\infty}\frac{1}{k}\log\left(\sum_{z\in\Per_k(f)\cap X_n}
    \exp S_{k}\varphi(z)\right).
\end{split}
\end{equation}
Combining~\eqref{kir} and~\eqref{eqrep} yields
\begin{equation*}
  h_\mu(f)+\int \varphi d\mu -\varepsilon
  \le \limsup_{k\to\infty}\frac{1}{k}\log
  \sum_{z\in\Per_k(\alpha,c_0)}\exp S_k\varphi(z).
\end{equation*}
Recall that by~\eqref{ni} the  map $c\mapsto P_{\rm P}(\varphi,\alpha,c)$ is
non-decreasing as $c\to 0^+$.
Since $\varepsilon>0$ is arbitrary the proof is complete.
\end{proof}

We are now in the situation to give the proof of Theorem \ref{Main1}.

\begin{proof}[Proof of Theorem~\ref{Main1}]
Let $0<\alpha$ and $0<c\leq 1$ such that $\Per_n(\alpha,c)\ne
\emptyset$ for some $n\in\bN$. 
We first prove that
\begin{equation}\label{le}
  P_{\rm P}(\varphi,\alpha,c) \le
  \sup_\nu \left\{h_\nu(f)+\int_\Lambda \varphi d\nu \right\}
  \leq P_{\rm top}(\varphi),
\end{equation}
where the supremum is taken over all $\nu\in \cM_{\rm E}$ with
$\alpha\le\chi(\nu)$. Note that the supremum in~\eqref{le} is not taken over
the empty set. 
The right hand side inequality in~\eqref{le} is a
consequence of the variational principle. 

In order to prove the left hand side inequality in~\eqref{le} we define
\[
\Lambda=\Lambda_{\alpha,c} 
\eqdef \overline{\bigcup_{n=1}^\infty \Per_n(\alpha,c)} .
\]
It follows from a continuity argument that $f$ is repelling on $\Lambda$.
Furthermore, for every $n\ge 1$ with $\Per_n(\alpha,c)\ne\emptyset$ we have, 
\begin{equation}\label{eqsi}
  \Per_n(f)\cap \Lambda = \Per_n(\alpha,c).
\end{equation}
Therefore, Lemma~\ref{ha} implies that
\begin{equation}\label{ghj}
P_{\rm P}(\varphi,\alpha,c)\leq P_{\rm top}(f|\Lambda,\varphi).
\end{equation}
It follows from  the variational principle that for every $\varepsilon>0$
there is a $\mu\in \cM_E$ which is supported in $\Lambda$ such that
\begin{equation}\label{hjk}
  P_{\rm top}(f|\Lambda,\varphi) -\varepsilon
  \le h_\mu(f) + \int \varphi d\mu
  \le P_{\rm top}(f|\Lambda,\varphi).
\end{equation}
Since $\mu$ is ergodic we have that $\chi(z)=\chi(\mu)$ for $\mu$-almost every
$z\in \Lambda$. It now follows from the continuity of $z\mapsto \lvert
f'(z)\rvert$ and the definition of $\Per_n(\alpha,c)$ that $\alpha\leq
\chi(z)$ for all $z\in \Lambda$. We conclude that
$\alpha\le\chi(\mu)$. Therefore, the left hand side inequality in~\eqref{le}
follows from~\eqref{ghj} and~\eqref{hjk}.  

Next, we prove that
\begin{equation}\label{pressp}
  P_{\rm top}(\varphi)\le \lim_{c\to 0}P_{\rm P}(\varphi,\alpha,c).
\end{equation}
Let $0<\alpha<\alpha(\varphi) $ and $0<\varepsilon<\alpha(\varphi)-\alpha$.
It follows from the definition of $\alpha(\varphi)$ (see
\eqref{defalpha}) that there exist  $\mu\in\cM_{\rm E}$ with 
$\chi(\mu)>\alpha(\varphi)-\varepsilon>\alpha$ such that  
\begin{equation}\label{gu}
  P_{\rm top}(\varphi)
  = h_{\mu}(f) + \int \varphi d\mu.
\end{equation}
Therefore,  Proposition~\ref{li} implies
\begin{equation}\label{eqwer}
  h_{\mu}(f)+\int\varphi d\mu
  \le \lim_{c\to 0}P_{\rm P}(\varphi,\alpha,c).
\end{equation}
Since $\varepsilon$ can be chosen arbitrary small,~\eqref{gu}
and~\eqref{eqwer} imply~\eqref{pressp}. 

Finally, we prove b). Let $\alpha>0$ such that~\eqref{presspmain} holds. 
For $n\ge 1$ and $c>0$ with $\Per_n(\alpha,c)\ne\emptyset$ we 
define the measure $\sigma_n=\sigma_n(\alpha,c,\varphi)\in\cM$ by 
\begin{equation}\label{tilsig}
\sigma_n=
\frac{1}{\sum\limits_{z\in\Per_n(\alpha,c)}\exp \left(S_n\varphi(z)\right)}
\sum_{z\in\Per_n(\alpha,c)}\exp \left(S_n\varphi(z)\right) \delta_z,
\end{equation}
where $\delta_z$ denotes the Dirac measure supported at $z$.
Note that every measure $\sigma_n=\sigma_n(\alpha,c,\varphi)$ defined
in~\eqref{tilsig} is in the convex hull  of the set
$\{\delta_z\colon z\in \Per_n(\alpha,c)\}$.  
Consider a subsequence $(\sigma_{n_k})_k$ converging to some measure
$\mu_{\alpha,c}=\mu_{\alpha,c}(\varphi)\in\cM$ in the weak$\ast$ topology. It
follows that $\chi(\mu_{\alpha,c})\ge \alpha$. 
Note that $f$ is expanding on $\Lambda_{\alpha,c}=\overline{\bigcup_{n=1}^\infty\Per_n(\alpha,c)}$. 
Thus, there exists an
expansivity constant $\delta=\delta(\alpha,c)$ for $f|\Lambda_{\alpha,c}$. In
particular, for every $n\in \bN$ and every  $0<\varepsilon\le\delta$ the set
$\Per_n(\alpha,c)$ is
$(n,\varepsilon)$-separated. As in the proof of \cite[Theorem 9.10]{Wal:81})
it follows that
\begin{equation}\label{krey}
  \limsup_{n\to\infty}\frac{1}{n}\log \sum_{x\in\Per_n(\alpha,c)}
  \exp\left(S_n\varphi(x)\right) \le 
  h_{\mu_{\alpha,c}}(f) + \int_X\varphi d\mu_{\alpha,c}.
\end{equation}
By construction, we have
\begin{equation}\label{ediu}
P_{\rm top}(\varphi) = 
\lim_{c\to 0}\left(h_{\mu_{\alpha,c}}(f) +
  \int_X\varphi d\mu_{\alpha,c}\right) .
\end{equation}
As $c$ decreases monotonically to zero, there exists a subsequence
$(\mu_{\alpha,c_k})_k$ converging to some measure $\mu=\mu(\varphi)\in\cM$
in the weak$\ast$ topology. Using the upper semi-continuity of the entropy map
and \eqref{ediu}, we can conclude that   
\[
\lim_{c_k\to 0}
\left(h_{\mu_{\alpha,c_k}}(f)+\int_X\varphi d\mu_{\alpha,c_k}\right)
= h_\mu(f) +\int_X\varphi d\mu
= P_{\rm top}(\varphi). 
\]
It remains to show that $\chi(\mu)\ge\alpha$. This is trivial in case the sets 
$\Lambda_{\alpha,c_k}$ do not accumulate at  critical points.
To handle the case that the
sets $\Lambda_{\alpha,c_k}$ possibly accumulate at a critical point $\gamma\in X$ we
consider a monotone decreasing sequence $(r_i)_i$ of positive 
numbers converging to $0$ and a monotone decreasing sequence of functions
$(\phi_i)_i$ in $C(X,\bR)$ such that the following holds:\\ 
(i) $\phi_i\geq \log|f'|$ and $\phi_i(z)=\log|f'(z)|$ for all $z\in X\setminus
B(\gamma,r_i)$.\\ 
(ii) $\phi_i(\gamma)\leq -i$.\\
In particular, $\phi_i$ converges pointwise to $\log|f'|$. Fix $i\in \bN$.
Since $\mu_{\alpha,c_k}$ converges to $\mu$ in the weak$\ast$ topology, we can
conclude that 
\begin{equation}
\int_X \phi_i d\mu = \lim_{k\to\infty} \int_X\phi_i d\mu_{\alpha,c_k}\geq
\liminf_{k\to\infty}\chi(\mu_{\alpha,c_k})\geq \alpha.  
\end{equation}
It now follows from~\eqref{mane} and the Monotone Convergence Theorem
that  
\[
\chi(\mu)=\lim_{i\to\infty} \int_X \phi_i d\mu \geq \alpha.
\]
One can choose $\mu$ to be ergodic by using an ergodic decomposition argument. 
The case when the sets $\Lambda_{\alpha,c_k}$ accumulate at finitely many
critical points can be treated entirely analogous. 
\end{proof}

\begin{*remark}{\rm 
    We note that we have used in the proof of Theorem~\ref{Main1} similar
    techniques as in our paper~\cite{GW} in the case of $C^2$-diffeomorphisms,
    as well as ideas from~\cite{ChuHir:03} where the topological entropy
    (i.e. $\varphi=0$) of surface diffeomorphisms is studied. 
}\end{*remark}

\end{document}